\newtheorem{theorem}{Theorem}[section]
\newtheorem{proposition}[theorem]{Proposition}
\newtheorem*{thm}{Main theorem}
\newtheorem{corollary}[theorem]{Corollary}
\newtheoremstyle{claim}
  {\topsep}
  {\topsep}
  {}
  {}
  {\itshape}
  {.}
  {.5em}
  {\thmname{#1}\thmnumber{ #2}\thmnote{ (#3)}}
\theoremstyle{claim}
\DeclareMathOperator{\interior}{int}
\DeclareMathOperator{\id}{id}
\DeclareMathOperator{\MCG}{MCG}
\DeclareMathOperator{\Aut}{Aut}
\newcommand{\p}[1]{\medskip\noindent\textbf{#1}\textbf{.}}
\definecolor{alexRed}{RGB}{221, 0, 0}
\definecolor{alexBlue}{RGB}{0, 0, 244}
\definecolor{alexGreen}{RGB}{0, 177, 0}
\definecolor{alexPurple}{RGB}{191, 0,255}
\definecolor{algRed}{RGB}{255, 0, 0}
\definecolor{algBlue}{RGB}{0, 75, 255}
\definecolor{algGreen}{RGB}{0, 132, 0}
\definecolor{triRed}{RGB}{249, 0, 0}
\definecolor{triBlue}{RGB}{20, 0, 255}
\definecolor{triGreen}{RGB}{0, 149, 0}
\definecolor{exhaustionpink}{RGB}{255,0,255}
\title{An Alexander method for infinite-type surfaces}
\author{Roberta Shapiro}
\date{}
\begin{document}

\maketitle

\begin{abstract}
The Alexander method is a combinatorial tool used to determine when two elements of the mapping class group are equal. We extend the Alexander method to include the case of infinite-type surfaces. Versions of the Alexander method were proven by Hern\'andez--Morales--Valdez and Hern\'andez--Hidber. As sample applications, we verify a particular relation in the mapping class group, show that the centralizers of many twist subgroups of the mapping class group are trivial, and provide a simple basis for the topology of the mapping class group.
\end{abstract}

\section{Introduction}

The Alexander method is a tool used to determine whether two homeomorphisms of a surface $S$ are isotopic. 
It states that two homeomorphisms of $S$ are isotopic if and only if they have identical actions on the isotopy classes of the curves in an Alexander system, which is a collection of curves with certain properties described below. 

The \textit{mapping class group} of a surface $S$, denoted $\MCG(S)$, is the group of isotopy classes of homeomorphisms of $S$. (When $S$ is orientable, this group is usually called the extended mapping class group; this distinction will be of no consequence in this paper.) There is an analogy between mapping class groups and general linear groups. Via the Alexander method, Alexander systems for surfaces play the role of basis vectors in a vector space in the sense that isotopy classes of homeomorphisms are determined by their actions on such a set of curves. 

Most earlier results surrounding the Alexander method work with surfaces of finite-type--that is, surfaces whose fundamental groups are finitely generated. The most general statement of the Alexander method for finite-type surfaces was formulated by Farb--Margalit \cite[Proposition 2.8]{primer}, but the method can be traced back to the works of Dehn \cite{Dehn,DehnTranslation} and Thurston \cite{FLP,FLPTranslation}. 

In this paper, we extend the notion of an Alexander system to include curve systems on infinite-type surfaces (surfaces whose fundamental groups are infinitely generated), and then prove our main result: a generalization of the Alexander method of Farb--Margalit to include the case of infinite-type surfaces (including non-orientable surfaces) \cite{primer}. 

A version of the Alexander method was proven for orientable surfaces by Hern\'andez--Morales--Valdez \cite{HMV} and for non-orientable surfaces by Hern\'andez--Hidber \cite{HH}. In both of these papers, the authors construct a family of Alexander systems such that any homeomorphism that permutes the isotopy classes of curves and arcs in an Alexander system by the identity is isotopic to the identity homeomorphism. In this paper, we prove the Alexander method in the case of arbitrary permutations and arbitrary Alexander systems. As such, our main theorem is a more direct analogue of the Alexander method for finite-type surfaces given by Farb--Margalit \cite{primer}.

\medskip
\noindent \textbf{Alexander systems.} 
Let $S$ be a (possibly infinite-type and possibly non-orientable) surface. Although $S$ may have punctures, it will be convenient to treat these punctures as marked points. For the remainder of this paper, we will refer to marked points and punctures (isolated planar ends) interchangeably. We define a curve to be the image of an embedding $\gamma:S^1\hookrightarrow S$. In this paper, an \textit{arc} is the image of a proper embedding $\gamma:(0,1)\hookrightarrow S$ with the additional characteristic of being \textit{finite-type}---that is, every arc has a regular neighborhood that is a compact surface (treating punctures as marked points). 

A curve in $S$ is \textit{essential} if it is not nullhomotopic, not homotopic to a puncture, and not homotopic to the boundary of a M\"{o}bius band; \textit{simple} if it does not self-intersect; and \textit{non-peripheral} if it is not isotopic to a boundary component. Our definition of curves implies that all curves are simple. An arc is essential and non-peripheral if it and (a subset of) a boundary component do not jointly bound an unpunctured disk in $S$. 

A \textit{subsurface exhaustion} of surface $S$ is a sequence of finite-type subsurfaces $\{S_n\}$ with $S_i\subset S_{i+1}$ such that $\cup S_n=S$. 

We say that a collection $\Gamma=\{\gamma_i\}_{i\in I}$  of essential, non-peripheral, simple closed curves and finite-type arcs on $S$ is an \textit{Alexander system} if it satisfies the following properties:
\begin{enumerate}
    \item (\textit{minimal position}) $\gamma_i$ and $\gamma_j$ are in minimal position for all $i,j\in I,$
    \item (\textit{distinct isotopy classes}) no two elements of $\Gamma$ are isotopic,
    \item (\textit{no triple intersections}) for distinct $i,j,k\in I,$ at least one of $\gamma_i\cap \gamma_j,$ $\gamma_i\cap \gamma_k,$ or $\gamma_j\cap \gamma_k$ is empty, and
	\item (\textit{local finiteness}) for a fixed subsurface exhaustion $\{S_n\}$ of $S$, \[|\{\gamma_i \in \Gamma\ :\ \gamma_i\cap S_n \neq \emptyset\}|<\infty\] for all $n$.
\end{enumerate}

Properties (1), (2), and (3) of Alexander systems are inherited from the classical finite-type Alexander method. Property (4) is only needed for infinite-type surfaces and is automatically satisfied for finite-type surfaces. Property (4) is also equivalent to the following: every finite-type subsurface of $S$ intersects finitely many elements of $\Gamma.$ In fact, property (4) is a necessary condition; a counterexample to the Alexander method with property (4) not satisfied is given in Section~\ref{sec:nonexamples}.

We say that a set of curves and arcs in minimal position in $S$ \textit{fills} $S$ if $S\setminus \Gamma$ is a union of disks, once punctured disks, and M\"obius bands (each possibly with noncompact boundary). Equivalently, the components of $S\setminus \Gamma$ have a trivial mapping class group.

\medskip
\noindent\textbf{Statement of the main theorem.} To state the main theorem, we require several more definitions. 
Define a surface graph to be an abstract graph with an embedding into some surface. Given an Alexander system on a surface $S$, let $G(S,\Gamma)=\cup \gamma_i$ be the surface graph (an abstract graph with an embedding into a surface) in $S$ whose vertex set, denoted $V(G),$ is comprised of the points of intersection of curves and arcs in $\Gamma$ and the endpoints of arcs (using marked points in lieu of punctures), and whose edges are the connected components of $\cup \gamma_i \setminus V(G)$. Let $G'(S,\Gamma)$ be the barycentric subdivision of $G(S,\Gamma)$, which is also a surface graph. 

An automorphism of a surface graph is an automorphism of the associated abstract graph that arises from a homeomorphism of the surface.

\begin{thm}
  Let $S$ be any surface, $\Gamma=\{\gamma_i\}_{i\in I}$ an Alexander system in $S$, and $\phi:S\to S$ a homeomorphism. Suppose $\sigma\in \Sigma_I$ is a permutation such that $\phi(\gamma_i)$ is isotopic to $\gamma_{\sigma(i)}$ for all $i.$ Then,

\begin{enumerate}
	\item  there exists a homeomorphism $\psi:S\to S$ isotopic to $\phi$ rel $\partial S$ such that $\psi(\gamma_i)=\gamma_{\sigma(i)}$ for all $\gamma_i\in \Gamma,$ 
	\item $\phi$ induces a unique automorphism $\phi_*$ of $G'(S,\Gamma),$ and 
	\item if $\Gamma$ is filling, then $\phi_*$ is the identity if and only if $\phi$ is isotopic to the identity.
\end{enumerate}
\end{thm}

The statement that $\phi_*$ acts by the identity on the barycentric subdivision of $G(S,\Gamma)$ is equivalent to saying that $\phi$ induces the identity automorphism of $G(S,\Gamma)$ and preserves the orientation of each loop edge.

We further note that $\phi$ need not be a self-homeomorphism of $S$, as our proof will not use this at all. That is, our proof applies to a homeomorphism $\phi:S\to S'$ and Alexander systems $\Gamma$ (indexed by $I$) and $\Gamma'$ (indexed by $I'$) on $S$ and $S'$, respectively, with a bijection $\sigma:S\to S'.$ In the case of distinct surfaces, $\phi_*$ is a graph isomorphism and the last statement of the main theorem does not apply.

When we work with the domain of $\phi,$ we will say we are working with the source and when we work with the range of $\phi$, we will name it the target. 

\p{A useful version: the case of the identity mapping class} A \textit{stable} Alexander system is an Alexander system such that any mapping class that acts by the identity on the set of isotopy classes of curves in this system is the identity mapping class. 

The existence of a stable Alexander system is proved for orientable infinite-type surfaces by Hern\'andez--Morales--Valdez \cite{HMV} and for non-orientable surfaces by Hern\'andez--Hidber \cite{HH}, both without noncompact boundary components. Both papers provide explicit constructions for stable Alexander systems. Given the constructions, proof of stability directly follows from the Alexander method for finite-type surfaces. The existence of a stable Alexander system is further proved for surfaces with noncompact boundary by Dickmann \cite{Dickmann}. 

In this paper, we provide another criterion for determining whether a homeomorphism is isotopic to the identity based on how it acts on an Alexander system, but this time without a significant restriction on the Alexander system; this is item (3) in the Alexander method. 

\medskip\noindent\textbf{Applications of the Alexander method.} The Alexander method and the existence of stable Alexander systems are key to proving multiple fundamental results about mapping class groups of finite-type surfaces, including the computation of the center of the mapping class group, the Dehn--Nielsen--Baer theorem, the solvability of the word problem for the mapping class group, and the existence of certain relations in the mapping class group \cite{primer}. 

Some of the above applications were extended to mapping class groups of infinite-type surfaces. For instance, Lanier and Loving \cite{lanierloving} compute the center of the mapping class group of a surface of infinite type using the results of Hern\'andez--Morales--Valdez; their approach is analogous to the finite-type case. 

In Section \ref{sec:apps} of this paper, we include several sample applications of our Alexander method: verifying relations in the mapping class group, computing centralizers of subgroups of the mapping class group, and describing the topology of the mapping class group using a simpler basis.

\medskip\noindent\textbf{Paper outline.} In Section \ref{sec:apps}, we prove the applications of our main theorem from the previous paragraph. In Section \ref{sec:proof}, where we prove the main theorem. We conclude with Section \ref{sec:nonexamples}, where we provide counterexamples to the Alexander method with each of the hypotheses altered.

\bigskip
\noindent\textit{\textbf{Acknowledgments.}}
The author is infinitely grateful to Dan Margalit for his support and for many helpful conversations. The author would further like to thank Jes\'us Hern\'andez Hern\'andez for extensive disucssions on a draft of this paper. The author would also like to thank Ferr\'an Valdez for correspondences regarding the subject matter and Daniel Minahan, Jorge Aurelio V\'iquez, Katherine Booth, Noah Caplinger, Yvon Verberne, Wade Bloomquist, and Brian Pinsky for many discussions. Thank you to Justin Lanier, John Etnyre, and Jennifer Hom for comments on a draft of this paper. Finally, thank you to an anonymous referee for pointing out a hole in an earlier draft of this paper. The author is supported by NSF grant DMS 1745583.

\section{Sample applications}\label{sec:apps}

In this section, we discuss three applications of the Alexander method: proving relations in the mapping class group of a surface, finding the centralizer of subgroups of the mapping class group, and describing the topology (and, more precisely, the non-discreteness and local non-compactness) of the mapping class group using a simpler basis. These applications represent the many types of questions the Alexander method can answer.

\subsection{Relations in the mapping class group}

Let $S$ be the surface with three ends accumulated by genus and no boundary components nor punctures (the tripod surface). We label the ends $a,\ b,$ and $c,$ as in Figure \ref{fig:exampleapp1shifts} and let $\Gamma$ be the Alexander system pictured in Figure \ref{fig:exampleapp1shifts}. 

\begin{figure}[htp]

\begin{center}
		\begin{tikzpicture}

		\node[anchor=south west,inner sep=0] at (2.4,0) {\includegraphics[width=0.6\textwidth]{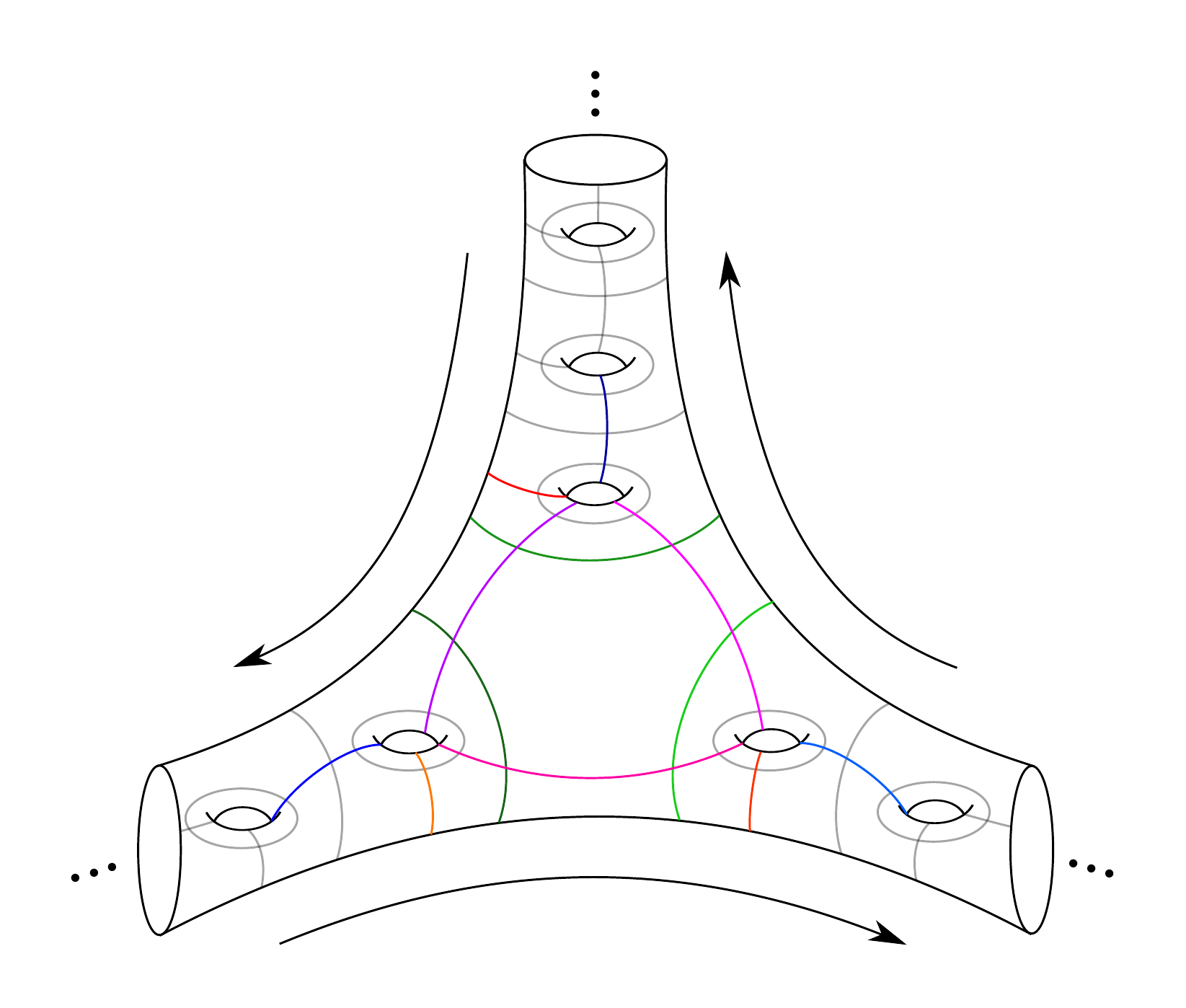}};


\node at (6.3,5.6){$b$};
\node at (3,1.1){$c$};
\node at (9.1,1.1){$a$};

\node at (6,0.5){$h_{ca}=h_{ac}^{-1}$};
\node at (4.7,3.5){$h_{bc}$};
\node at  (7.45,3.5){$h_{ab}$};

		\end{tikzpicture}

		\end{center}

    \caption{Surface $S$ with ends $a,\ b,$ and $c$, along with an Alexander system $\Gamma$ on $S$. The handle shifts used in this example are pictured as well.}
    \label{fig:exampleapp1shifts}
\end{figure}

\begin{figure}[htp]

\begin{center}
		\begin{tikzpicture}

		\node[anchor=south west,inner sep=0] at (0,0) {\includegraphics[width=\textwidth]{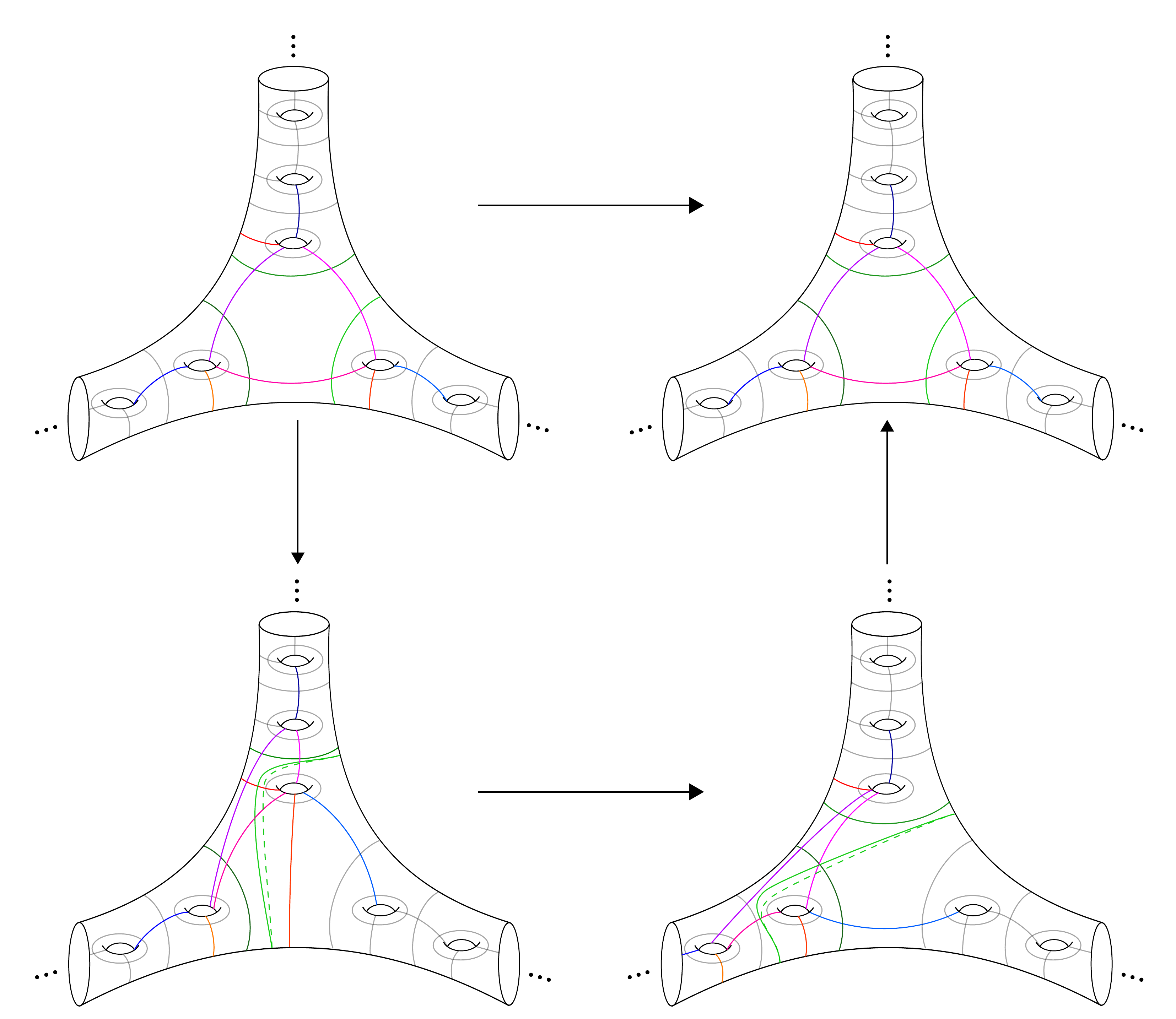}};


\node at (5.55,6.5){$a$};
\node at (.4,6.5){$c$};
\node at (3.2, 10.1){$b$};

\node at (6,8.65){$id$};
\node at (2.8,5.5){$h_{ab}$};
\node at (6, 2.6){$h_{bc}$};
\node at (9.57,5.5){$h_{ca}$};

		\end{tikzpicture}

		\end{center}

    \caption{Computations for the action of $h_{ca}h_{bc}h_{ab}$ on $S$ and $\Gamma$.}
    \label{fig:exampleapp1}
\end{figure}

Let $h_{ab}$ be a handle shift from end $a$ to end $b$, $h_{bc}$ a handle shift from end $b$ to end $c,$ and $h_{ca}$ a handle shift from end $c$ to end $a.$ (Handle shifts are not determined by a pair of ends; we take the arrows in Figures \ref{fig:exampleapp1shifts} and \ref{fig:exampleapp1} as the definitions of the handle shifts.) We verify a the following relation, which is a special case of a relation initially proven by Afton--Freedman--Lanier--Yin \cite{AFLY} using the Alexander method for finite-type surfaces.

\begin{proposition}[Afton--Freedman--Lanier--Yin]
Let $S$ be the surface and $h_{ab},\ h_{bc}$ and $h_{ca}$ the handle shifts in Figure \ref{fig:exampleapp1}. Then, 
\[h_{ca}h_{bc}h_{ab}=id,
\]
where we compose from right to left.
\end{proposition}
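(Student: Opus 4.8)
The plan is to invoke item~(3) of the main theorem. First I would record that the pictured system $\Gamma$ fills $S$: reading off Figure~\ref{fig:exampleapp1shifts}, each component of $S\setminus\Gamma$ is a disk (possibly with noncompact boundary), which has trivial mapping class group; as $S$ is orientable with no punctures, no M\"obius bands or once-punctured disks occur. Granting this, to prove $h_{ca}h_{bc}h_{ab}=\id$ it suffices to show that the composition $\phi:=h_{ca}h_{bc}h_{ab}$ induces the trivial automorphism $\phi_*$ of the barycentric subdivision $G'(S,\Gamma)$. By the remark immediately following the theorem, this is equivalent to the more hands-on statement that $\phi$ induces the identity automorphism of $G(S,\Gamma)$ and preserves the orientation of every loop edge. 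In particular the whole proposition reduces to showing that $\phi$ fixes the isotopy class of each curve and arc of $\Gamma$, compatibly with the graph structure.

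To check this---and simultaneously to verify the hypothesis of the main theorem, namely that $\phi$ permutes the isotopy classes of $\Gamma$ by \emph{some} permutation $\sigma$---I would track each element of $\Gamma$ through the three handle shifts one at a time, as in Figure~\ref{fig:exampleapp1}. For a fixed $\gamma\in\Gamma$ I would compute $h_{ab}(\gamma)$, then $h_{bc}\bigl(h_{ab}(\gamma)\bigr)$, then apply $h_{ca}$, put the result in minimal position with $\Gamma$, and read off that it is isotopic to $\gamma$ with its graph data intact. The geometric principle making this work is that a handle shift slides the genus of $S$ one notch along the leg of the tripod joining two of its ends; traversing the cycle $a\to b\to c\to a$ returns every handle, and hence every curve, to where it began. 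This simultaneously shows $\sigma=\id$, so $\phi_*$ fixes every vertex and edge of $G(S,\Gamma)$.

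The difficulty here is organizational rather than conceptual: $\Gamma$ is an infinite system whose curves run out toward all three ends across infinitely many handles, so one cannot simply check finitely many cases. I would cut the work down using the evident order-three rotational symmetry of Figure~\ref{fig:exampleapp1shifts}, which cyclically permutes the ends $(a,b,c)$ and the shifts $(h_{ab},h_{bc},h_{ca})$; this reduces the verification to finitely many representative curve-types near a single end, with the behavior on all remaining handles following by translation. Two small points require care, and are where I expect the only real friction: first, that the handle-by-handle images are genuinely in minimal position with $\Gamma$, so that ``isotopic to $\gamma$'' is read off correctly; and second, that $\phi$ reverses no loop edge of $G(S,\Gamma)$, since orientation-coherence on loop edges is exactly the extra data distinguishing $G'$ from $G$. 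With these in hand, $\phi_*=\id$ on $G'(S,\Gamma)$, and item~(3) of the main theorem gives $h_{ca}h_{bc}h_{ab}=\id$.
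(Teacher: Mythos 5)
Your overall strategy coincides with the paper's: verify pictorially that $\phi=h_{ca}h_{bc}h_{ab}$ fixes the isotopy class of every element of $\Gamma$ together with the orientations of the loop edges of $G(S,\Gamma)$, then conclude from item~(3) of the main theorem. You also correctly flag the two points the paper emphasizes: the infinite verification must be reduced to a finite one, and the loop-edge orientation check cannot be skipped (the paper notes there is an orientation-reversing homeomorphism of $S$ fixing the isotopy class of every curve in $\Gamma$, so the action on isotopy classes alone is inconclusive).

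The one step of your proposal that does not work as stated is the finiteness reduction via the order-three symmetry. Let $\rho$ be the rotation with $\rho(a)=b$, $\rho(b)=c$, $\rho(c)=a$, so that $\rho h_{ab}\rho^{-1}=h_{bc}$, $\rho h_{bc}\rho^{-1}=h_{ca}$, $\rho h_{ca}\rho^{-1}=h_{ab}$. Then $\rho\,\phi\,\rho^{-1}=h_{ab}h_{ca}h_{bc}$, a cyclic permutation of the word rather than the word itself. Consequently, verifying that $\phi$ fixes the curves in one fundamental domain of $\rho$ only tells you that the \emph{different} product $h_{ab}h_{ca}h_{bc}$ fixes the curves in the next domain; it does not transfer the verification for $\phi$. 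This is repairable---by symmetry it suffices to check all three cyclic orderings of the word on curves along a single leg, which is a finite amount of work---but the transfer as you stated it is invalid. The paper's reduction is simpler and avoids the issue entirely: for any curve of $\Gamma$ at distance more than one genus from the center, the three shifts cancel on the nose (for instance, on the $a$-leg, $h_{ab}$ moves such a curve one handle inward, $h_{bc}$ does not meet it, and $h_{ca}$ moves it back), so only the finitely many curves within one genus of the center---exactly those tracked in Figure~\ref{fig:exampleapp1}---need to be inspected, along with their loop-edge orientations.
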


The computations are shown in Figure \ref{fig:exampleapp1}. We only need to keep track of the curves within one genus of the center of $S$, as all other curves are not distorted by the handle shifts. 

The orientation of loop edges in $G(S,\Gamma)$ does not change upon application of $h_{ca}h_{bc}h_{ab}$. We conclude that $h_{ca}h_{bc}h_{ab}=id$ is a valid relation. 

Afton--Freedman--Lanier--Yin also observe that in the tripod surface, $h_{ab},\ h_{bc},$ and $h_{ca}^{-1}$ are conjugate in $\MCG(S)$, implying that in the abelianization of the $\MCG(S),$ handle shifts are equal to the identity \cite{AFLY}.

Similar relations are true in more general settings, including non-orientable handle shifts. Such relations can be verified using the version of the Alexander method in Hern\'andez--Morales--Valdez \cite{HMV}; our Alexander method provides more flexibility with the choice of the Alexander system. We note that the action on the above Alexander system alone (without considering the orientations of curves) does not show that the above homeomorphism is isotopic to the identity, as there is a second, orientation-reversing homeomorphism of $S$ that fixes the isotopy classes of all curves in $\Gamma.$ 

\subsection{Centralizer of subgroups of mapping class groups}

In this section, we will show a result relating to the centralizers of twist subgroups of mapping class groups.

Let $S$ be a surface with filling Alexander system $\Gamma=\{\gamma_i\}_{i\in I}$ comprised of two-sided curves. We define $T=\langle \{T_{\gamma_i}^{k_i}\}_{i\in I} \rangle$ to be the subgroup generated by the $k_i$th powers of the Dehn twists about the $\gamma_i$. The propositions in this section concern this subgroup.

Let $G_r(S,\Gamma)$ be the ribbon graph associated with $G(S,\Gamma)$. Since $\Gamma$ is comprised of two-sided curves without triple intersections, we have that the geometric realization of $G_r(S,\Gamma)$ is a union of annuli $\{A_i\}$ with core curves $\{\gamma_i\}$ glued together along disks. 
We define an automorphism of a ribbon graph to be an automorphism of the underlying abstract graph that preserves the $A_i$ setwise. We note that this is not the usual definition of an automorphism of a ribbon graph.

If $f:S\to S$ induces an automorphism of $G_r(S,\Gamma)$ that fixes the curves corresponding the elements of $\Gamma$, then each $A_i$ is mapped to itself and its orientation is either preserved or reversed. In particular, for the orientation of $A_i$ to be preserved, the core curve $\gamma_i$ of $A_i$ must be mapped to itself and the orientations of both the core and cocore of $A_i$ must both be preserved or must both be reversed.

Let $G'_a(S,\Gamma)$ be the abstract graph associated with $G'(S,\Gamma).$ We note that automorphisms of $G'(S,\Gamma)$ induce automorphisms of $G_r(S,\Gamma)$ by considering $G_r(S,\Gamma)$ to be embedded in $S$ with core curves $\gamma_i$. Similarly, automorphisms of $G_r(S,\Gamma)$ induce automorphisms of $G'_a(S,\Gamma)$ by forgetting the annular structure. That is, there are natural inclusion (and, in particular, injective) maps
\[\Aut(G'(S,\Gamma)) \xhookrightarrow{F} \Aut(G_r(S,\Gamma)) \xhookrightarrow{H} G'_a(S,\Gamma).\]
We note that if we replace $\Aut(G'(S,\Gamma))$ by $\Aut(G(S,\Gamma))$, this map need not be injective.

We call an Alexander system $\Gamma$ on $S$ \textit{weakly stable} if: 1) it is filling and 2) any automorphism of $G_r(S,\Gamma)$ in the image of $F$ above that preserves each $A_i$ with orientation is the image of the trivial automorphism of $G'(S,\Gamma)$. 

\begin{proposition}\label{prop:appcentralizer}
Let $S$ be a surface and $\Gamma$ a weakly stable Alexander system comprised of two-sided curves. Let $\{k_i\}_{i\in I}$ be a collection of non-zero integers. Then, the centralizer in $\MCG(S)$ of the subgroup $T=\langle \{T_{\gamma_i}^{k_i}\}_{i\in I} \rangle$ is trivial.
\end{proposition}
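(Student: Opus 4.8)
The plan is to convert the centralizer condition into constraints on how $f$ acts on the curves of $\Gamma$ and on their annular neighborhoods via the standard conjugation formula for Dehn twists, and then feed those constraints into the main theorem together with weak stability. Suppose $f \in \MCG(S)$ centralizes $T$, so that $f$ commutes with each $T_{\gamma_i}^{k_i}$. For a two-sided curve $\gamma_i$, the conjugation formula reads $f T_{\gamma_i} f^{-1} = T_{f(\gamma_i)}^{\epsilon_i}$, where $\epsilon_i = +1$ if $f$ preserves the orientation of a regular (annular) neighborhood of $\gamma_i$ and $\epsilon_i = -1$ if it reverses it. Commuting with $T_{\gamma_i}^{k_i}$ then yields
\[
T_{f(\gamma_i)}^{\epsilon_i k_i} = T_{\gamma_i}^{k_i}.
\]

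Next I would extract the consequences of this equality. Any relation $T_\alpha^m = T_\beta^n$ with $m,n \neq 0$ among two-sided curves is witnessed inside a finite-type subsurface containing both $\alpha$ and $\beta$, so the finite-type facts apply: such an equality forces $\alpha \simeq \beta$ and $m = n$. Applied to the displayed equation this gives $f(\gamma_i) \simeq \gamma_i$ for every $i$, together with $\epsilon_i k_i = k_i$. Since each $k_i$ is nonzero, $\epsilon_i = +1$, so $f$ preserves the orientation of the annular neighborhood $A_i$ of each $\gamma_i$. Thus the permutation $\sigma$ attached to $f$ by the main theorem is the identity, and $f$ preserves every annulus with its orientation.

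I would then invoke the main theorem. By parts (1) and (2) (with $\sigma = \id$), $f$ is isotopic to a homeomorphism fixing each $\gamma_i$ setwise and induces a unique $f_* \in \Aut(G'(S,\Gamma))$. Pushing forward through the inclusion $F$ gives an automorphism $F(f_*)$ of the ribbon graph $G_r(S,\Gamma)$ that fixes each $\gamma_i$. The orientation computation above shows $F(f_*)$ preserves each $A_i$ \emph{with orientation}: the sign $\epsilon_i = +1$ records exactly that $f$ preserves the orientation of the annular neighborhood, which by the criterion stated before the proposition is the condition that the core and cocore directions of $A_i$ be simultaneously preserved or simultaneously reversed. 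Weak stability then forces any such automorphism in the image of $F$ to be the image of the trivial automorphism of $G'(S,\Gamma)$; since $F$ is injective, $f_* = \id$. Finally, because $\Gamma$ is filling (the first clause of weak stability), part (3) of the main theorem gives that $f$ is isotopic to the identity, i.e. $f = \id$ in $\MCG(S)$.

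The main obstacle I expect is the orientation bookkeeping in the third step: matching the algebraic sign $\epsilon_i$ from the conjugation formula with the geometric notion of preserving the orientation of $A_i$ that defines ribbon-graph automorphisms, and confirming that both the conjugation formula and the twist-equality fact transfer to the infinite-type setting by localizing to a finite-type subsurface. These reductions are routine but must be stated precisely so that the passage through $F$ and the application of weak stability are fully justified.
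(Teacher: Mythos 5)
Your proposal is correct and follows essentially the same route as the paper's proof: commutation with the powers $T_{\gamma_i}^{k_i}$ forces $f$ to fix each isotopy class $[\gamma_i]$ and the orientation of each annular neighborhood, statement (2) of the main theorem then produces the induced automorphism of $G'(S,\Gamma)$, weak stability (via the injective map $F$ into $\Aut(G_r(S,\Gamma))$) forces this automorphism to be trivial, and statement (3) concludes that $f$ is isotopic to the identity. The only difference is that you spell out the conjugation formula $fT_{\gamma_i}f^{-1}=T_{f(\gamma_i)}^{\epsilon_i}$ and the rigidity fact $T_\alpha^m=T_\beta^n \Rightarrow \alpha\simeq\beta,\ m=n$, which the paper compresses into a single asserted sentence.
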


It is not straightforward to see how Proposition \ref{prop:appcentralizer}---and, by extension, Corollary~\ref{cor:appcentralizer} that follows---could be achieved using the version of the Alexander method of Hern\'andez--Morales--Valdez \cite{HMV}, as the graphs $G(S, \Gamma)$ and $G_r(S,\Gamma)$ play a crucial role in the proofs.

\begin{proof}[Proof of Proposition \ref{prop:appcentralizer}]
Suppose $[f]\in \MCG(S)$ is in the centralizer of $T$. Since $[f]$ commutes with each $T_{\gamma_i}^{k_i},$ it follows that $f$ fixes the isotopy classes of all curves in $\Gamma$ as well as the orientation of the annular neighborhood of each curve. We then have that $f$ induces an automorphism $\tilde{\alpha}$ of $G'(S,\Gamma)$ by statement 2 of the Alexander method. Furthermore, $\alpha=F(\tilde{\alpha})$ is an automorphism of $G_r(S,\Gamma)$ that preserves the orientations of the $A_i$. 

 Since $\Gamma$ is weakly stable, $\tilde{\alpha}$ is the identity. The third statement of the Alexander method implies that $[f]$ is the identity mapping class.
\end{proof}

We use the above proposition to prove the following example.

\begin{corollary}\label{cor:appcentralizer}
Let $S$ and $\Gamma$ be the surface and Alexander system in Figure~\ref{fig:appcentsystem}. Let $\{k_i\}_{i\in I}$ be a collection of non-zero integers. Then, $\Gamma$ is weakly stable and hence the centralizer in $\MCG(S)$ of the subgroup $T=\langle \{T_{\gamma_i}^{k_i}\}_{i\in I} \rangle$ is trivial.
\end{corollary}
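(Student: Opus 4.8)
The plan is to reduce everything to verifying that $\Gamma$ is \emph{weakly stable}; once that is established, the triviality of the centralizer of $T$ is immediate from Proposition~\ref{prop:appcentralizer}. By definition, weak stability requires two things: that $\Gamma$ fills $S$, and that every automorphism of $G_r(S,\Gamma)$ lying in the image of $F$ and preserving each annulus $A_i$ together with its orientation is the image under $F$ of the trivial automorphism of $G'(S,\Gamma)$. I would treat these two requirements separately.

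For the filling condition, I would simply inspect the configuration in Figure~\ref{fig:appcentsystem} and check that each connected component of $S\setminus\Gamma$ is a disk, a once-punctured disk, or a M\"obius band (possibly with noncompact boundary). This is a direct, region-by-region verification using the local finiteness of $\Gamma$, and it presents no conceptual difficulty.

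The substance is the second condition. Let $\alpha = F(\tilde{\alpha})$ be an automorphism of $G_r(S,\Gamma)$, with $\tilde{\alpha} \in \Aut(G'(S,\Gamma))$ unique by injectivity of $F$, and suppose $\alpha$ preserves each $A_i$ with orientation. As recorded before the definition of weak stability, this means that each core curve $\gamma_i$ is sent to itself and that the orientations of both the core and cocore of $A_i$ are preserved (or both reversed). My goal is to show $\tilde{\alpha} = \id$, for which it suffices to show $\tilde{\alpha}$ fixes every vertex and every oriented edge of $G'(S,\Gamma)$. Since preserving the core orientation of each $\gamma_i$ fixes a cyclic (for curves) or linear (for arcs) order on the intersection points lying on $\gamma_i$, and since no triple intersections occur, each vertex of $G(S,\Gamma)$ is the unique intersection point of a determined pair of elements of $\Gamma$. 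I would use the specific adjacency pattern of Figure~\ref{fig:appcentsystem} to argue that distinct intersection vertices lie on distinct unordered pairs of curves, so they cannot be permuted among themselves by $\tilde{\alpha}$; this pins down every vertex, and the preservation of edge orientations (guaranteed by passing to the barycentric subdivision $G'$) then pins down every edge.

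The main obstacle is ruling out a \emph{rotation} along a closed core curve: a priori an orientation-preserving automorphism of a single annulus $A_i$ could cyclically shift the intersection points along $\gamma_i$, which on that annulus alone would be consistent with preserving $A_i$ with orientation. I would eliminate this possibility globally, by combining the fixed cocore orientations with the gluing data from the figure: a nontrivial cyclic shift along one core would force an incompatible reassignment of the neighboring annuli (which are themselves fixed with orientation), contradicting consistency at the gluing disks. Once no such rotation can occur, $\tilde{\alpha}$ fixes all of $G'(S,\Gamma)$, so $\tilde{\alpha} = \id$, establishing weak stability; Proposition~\ref{prop:appcentralizer} then yields that the centralizer of $T$ in $\MCG(S)$ is trivial.
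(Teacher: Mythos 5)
Your overall skeleton matches the paper's: verify that $\Gamma$ is weakly stable, then invoke Proposition~\ref{prop:appcentralizer}. But there is a genuine gap at the heart of your weak-stability argument. As you yourself record, the hypothesis that $\alpha$ preserves each $A_i$ with orientation only gives that the orientations of the core and cocore of $A_i$ are \emph{both preserved or both reversed}; it does not give that the core orientation of each $\gamma_i$ is preserved. Yet your next step begins ``since preserving the core orientation of each $\gamma_i$ fixes a cyclic\ldots\ order,'' and your resolution of the rotation problem likewise invokes ``the fixed cocore orientations.'' You have silently discarded the both-reversed branch, which is exactly the nontrivial case: an orientation-preserving homeomorphism of an annulus $S^1\times[0,1]$ such as $(\theta,t)\mapsto(-\theta,1-t)$ reverses both core and cocore, so nothing in the hypothesis prevents $\tilde{\alpha}$ from reversing some of the $\gamma_i$. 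Ruling this out is the main content of the paper's proof: it first uses the asymmetry of the abstract graph $G_a(S,\Gamma)$ (monochromatic cycles preserved setwise with no two interchanged, all 3-cycles fixed) to reduce $\alpha'_*$ to a product of loop-edge reversals and swaps of edges in 2-cycles---moves which correspond to reversing the orientations of individual curves---and then kills these by an anchor-and-propagate argument: the curves forming the red 3-cycles are fixed pointwise, hence with orientation; the annulus-orientation hypothesis then forces every curve crossing them to have its orientation preserved as well; and one continues outward toward the ends. Your proposal contains no analogue of this step, and without it the conclusion $\tilde{\alpha}=\id$ does not follow.

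A secondary problem is your claim that distinct intersection vertices lie on distinct unordered pairs of curves, so that vertices cannot be permuted. The paper's own proof explicitly contends with 2-cycles in $G_a(S,\Gamma)$, which arise when a curve meets the rest of the system in exactly two points; in that situation two distinct vertices are \emph{not} distinguished by the data you propose, and pinning them down requires the graph-asymmetry argument rather than a distinct-pairs claim. Your instinct that a rotation along a closed core is the remaining danger is correct, and your gluing-consistency sketch is in the right spirit (it parallels the paper's use of setwise-preserved cycles and fixed 3-cycles), but as written it presupposes the orientation preservation that you have not established, so it cannot stand on its own.
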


\begin{figure}[htp]

\begin{center}
		\begin{tikzpicture}

		\node[anchor=south west,inner sep=0] at (0,0) {\includegraphics[width=\textwidth]{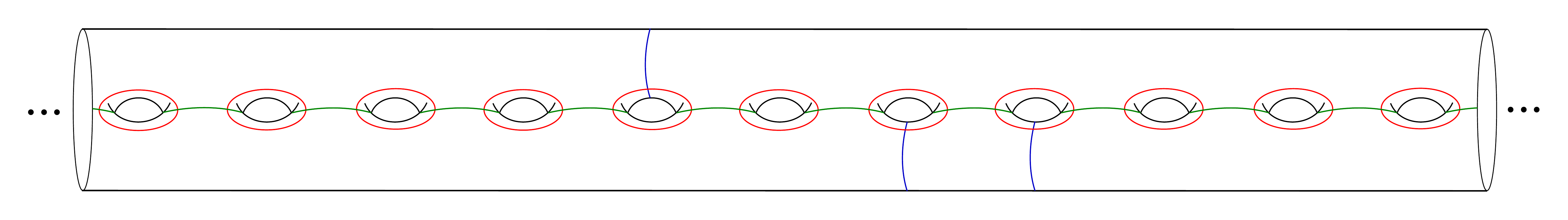}};




		\end{tikzpicture}
		\end{center}

    \caption{The Alexander system $\Gamma=\{\gamma_i\}_I$ in Corollary~\ref{cor:appcentralizer}.}
    \label{fig:appcentsystem}
\end{figure}

\begin{figure}[htp]

\begin{center}
		\begin{tikzpicture}

		\node[anchor=south west,inner sep=0] at (0,0) {\includegraphics[width=\textwidth]{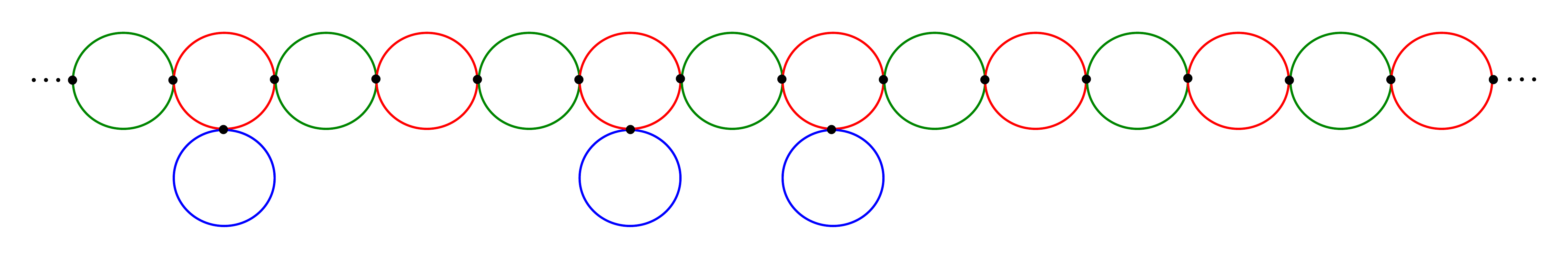}};
		\end{tikzpicture}
		\end{center}

    \caption{The graph $G_a(S,\Gamma)$. Any automorphisms of $G_a(S,\Gamma)$ must preserve loop edges. Due to the lack of symmetry in $G_a(S,\Gamma),$ every automorphism of $G(S,\Gamma)$ must fix all loop edges, possibly reversing orientation. As such, all monochromatic loops must be fixed setwise.}
    \label{fig:appcentgraph}
\end{figure}

\begin{proof}
Let $\alpha\in\Aut G_r(S,\Gamma)$ be induced by an automorphism $\tilde{\alpha}$ of $G'(S,\Gamma)$ and suppose $\alpha$ maps the annuli $A_i$ to themselves and preserves their orientations. Let $\alpha_*\in\Aut G'_a(S,\Gamma)$ be the induced automorphism of the abstract graph. That is,
\begin{alignat*}{3}
\Aut(G'(S,\Gamma)) &\xhookrightarrow{F} \Aut(G_r(S,\Gamma)) &\xhookrightarrow{H} G'_a(S,\Gamma)\\
\tilde{\alpha}\quad&\mapsto \quad\quad\quad\alpha\quad &\mapsto\quad\alpha_*.\quad
\end{alignat*}
We note that $\tilde{\alpha}$ induces $\alpha_*$ as well by forgetting the surface structure. We want to show that $\tilde{\alpha}$ is the identity automorphism. 

Let $G_a(S,\Gamma)$ be the abstract graph associated to $G(S,\Gamma)$. If we orient the loop edges of $G_a(S,\Gamma)$ and keep track of their orientations, then the elements of $\Aut(G_a(S,\Gamma))$ are in a natural bijection with elements of $\Aut(G'_a(S,\Gamma))$ (automorphisms of $G_a(S,\Gamma)$ need not repsect orientation). Let $\alpha'_*\in \Aut(G_a(S,\Gamma))$ be the automorphism corresponding to $\alpha_*.$ 

 We first show that $\alpha'_*$ is a product of automorphisms that reverse the directions of loop edges or swap pairs of edges in 2-cycles. Any automorphism of $G_a(S,\Gamma)$ must preserve same-colored cycles setwise such that no two cycles are interchanged. This property of $\alpha'_*$ is also inherited from $\alpha.$ Moreover, all 3-cycles are fixed by any automorphism of $G_a(S,\Gamma).$ It follows that $\alpha_*$ may only reverse orientations of loop edges (equivalent to $a_*$ swapping edges in 2-cycles in $G'_a(S,\Gamma)$) or swap pairs of edges in 2-cycles.

Since $\alpha$ preserves the $A_i$ with orientations, the orientations of all curves that intersect the curves corresponding to the red 3-cycles are preserved. As such, the orientations of loop edges cannot be reversed. Similarly, by working our way sequentially toward the ends, the orientations of all other curves must be preserved as well. It follows that $\alpha'_*$---and similarly $\alpha_*$---are the trivial automorphisms. It follows from injectivity that $\tilde{\alpha}$ is the trivial automorphism of $G'(S,\Gamma)$.

We now have that $\Gamma$ is weakly stable and Proposition~\ref{prop:appcentralizer} gives us that the centralizer of $T$ is trivial. 
\end{proof}

\subsection{Topology of mapping class groups}

Aramayona--Vlamis discuss the topology on the mapping class group of infinite-type surfaces and show that the mapping class groups of infinite-type surfaces are not discrete and not locally compact \cite{AV}. As we describe below, the topology can be described in a simpler manner using the Alexander method. The applications in this section were suggested by Jes\'us Hern\'andez Hern\'andez \cite{hernandez}.

\medskip\noindent\textbf{The topology on the mapping class group via the Alexander method.}
There is a natural topology on $\MCG(S)$ arising from the compact-open topology. The permutation topology is an equivalent and more combinatorial description of the topology on mapping class groups of infinite-type surfaces (see \cite[Section 4.1]{AV} for more details). We give a new perspective on the permutation topology in light of the Alexander method.

For any finite set $A$ of isotopy classes of curves in $S$, let $U(A)$ be the set of mapping classes that fix all of the curves in $A.$ Define the permutation topology on $\MCG(S)$ to be the topology whose basis elements are all $\MCG(S)-$translates of all $U(A)$.

Let $S$ be an infinite-type surface and $\Gamma$ a stable Alexander system in $S.$ Such a system exists by the works of Hern\'andez--Morales--Valdez \cite{HMV}, Hern\'andez--Hidber \cite{HH}, and Dickmann \cite{Dickmann}. We define the \textit{Alexander topology} $\mathcal{T}_\Gamma$ on $\MCG(S)$ as follows. For any finite subset $B$ of (the isotopy classes of) curves in $\Gamma,$ let $U(A)$ be the set of mapping classes that fix all the curves in $B.$ Then, the basis for $\mathcal{T}_A$ is the set of all $\MCG(S)-$translates of all $U(B).$

$\mathcal{T}_\Gamma$ is indeed a topology. The basis elements cover $\MCG(S)$ since $h\in h\cdot U(B)$ for every $B\subseteq \Gamma$ (abusing notation, as the elements of $B$ are isotopy classes of curves and $\Gamma$ contains curves). We also have that if $h\in g_1\cdot U(B_1) \cap g_2\cdot U(B_2),$ then $h\in h\cdot(B_1)\cap h\cdot(B_2)=h\cdot (B_1\cup B_2),$ another basis element. 

We show that the permutation topology and the Alexander topology are equivalent. This result is to be expected since, by the Alexander method, a mapping class is determined by its action on an Alexander system.

\begin{proposition}\label{prop:topology}
Let $S$ be an infinite-type surface and $\Gamma$ a stable Alexander system on $S$. Then, $\mathcal{T}_\Gamma$ is equal to the permutation topology $\mathcal{T}$.
\end{proposition}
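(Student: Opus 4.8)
The plan is to show the two topologies coincide by proving that each basis element of one topology is open in the other, which by the standard criterion for comparing topologies reduces to a containment statement between basic open sets. Since both topologies are defined via $\MCG(S)$-translates, it suffices to compare the untranslated basic sets: I would show that every $U(A)$ (for a finite set $A$ of isotopy classes of curves in $S$) contains a translate of some $U(B)$ with $B \subseteq \Gamma$ around each of its points, and conversely. Because translation by a fixed mapping class is a homeomorphism in either topology, it is enough to handle neighborhoods of the identity and then transport the statement by translation.

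The first direction, $\mathcal{T} \subseteq \mathcal{T}_\Gamma$, is the one where the Alexander method does real work. Given a finite set $A = \{a_1, \dots, a_n\}$ of isotopy classes of curves, I want a finite $B \subseteq \Gamma$ so that any mapping class fixing every element of $B$ also fixes every $a_j$. The idea is to realize each $a_j$ in minimal position and observe that $a_j$ meets only finitely many elements of $\Gamma$ by local finiteness (property (4)); more carefully, since $\Gamma$ is stable and hence fills, each $a_j$ is determined up to isotopy by the finitely many elements of $\Gamma$ it crosses together with the combinatorial pattern of crossings. Collecting these finitely many curves over all $j$ gives a finite $B \subseteq \Gamma$. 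If $h$ fixes every element of $B$, then by the Alexander method $h$ can be taken to fix the graph $G(S,\Gamma')$ for the relevant sub-system, and hence it preserves the complementary pieces that pin down each $a_j$, forcing $h(a_j) \simeq a_j$. Thus $U(B) \subseteq U(A)$, which yields the desired containment of basis elements.

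The reverse direction, $\mathcal{T}_\Gamma \subseteq \mathcal{T}$, is comparatively formal: any basic set $U(B)$ for $\mathcal{T}_\Gamma$ is literally of the form $U(A)$ with $A = B$ viewed as a finite set of isotopy classes of curves in $S$, since $\Gamma$'s elements are (isotopy classes of) curves. Hence every $\mathcal{T}_\Gamma$-basic set is already a $\mathcal{T}$-basic set, and passing to $\MCG(S)$-translates preserves this. So this inclusion requires essentially no new argument beyond noting that the arcs of $\Gamma$, if any, do not appear here because we restricted to curves, and that stability guarantees $\Gamma$ consists of curves in the relevant sense.

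The main obstacle I expect is making the first direction fully rigorous: one must argue that fixing the finitely many curves of $\Gamma$ met by each $a_j$ genuinely determines the isotopy class of $a_j$, not merely its crossing data. This is exactly where the filling hypothesis is essential—the complementary regions being disks, once-punctured disks, and M\"obius bands means a curve is recovered from how it threads through the graph $G(S,\Gamma)$—and where one must be careful that local finiteness supplies the finiteness of $B$. I would isolate this as the key lemma: for a stable (hence filling) Alexander system $\Gamma$ and any isotopy class $a$, there is a finite $B \subseteq \Gamma$ with $U(B) \subseteq U(\{a\})$, and then the proposition follows by taking unions over the finite set $A$ and transporting by translation.
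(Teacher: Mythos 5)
Your route is the same as the paper's: the inclusion $\mathcal{T}_\Gamma\subseteq\mathcal{T}$ is the formal one (every $\mathcal{T}_\Gamma$-basic set is literally a $\mathcal{T}$-basic set), and all the content sits in producing, for a basic set $g\cdot U(A)\ni h$, a finite $B\subseteq\Gamma$ with $h\cdot U(B)\subseteq g\cdot U(A)=h\cdot U(A)$, where $B$ is chosen using local finiteness and filling so that it ``pins down'' the curves of $A$. This is exactly the paper's $\Gamma_A$ (a finite subset of $\Gamma$ filling a subsurface containing the curves of $A$). In fact you orient the key containment correctly, $U(B)\subseteq U(A)$; the paper's displayed chain reads $h\cdot U(A)\subseteq h\cdot U(\Gamma_A)$, which is backwards---as written it only re-establishes the trivial inclusion---so your formulation is the one that actually needs to be proved.

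The genuine gap is in your justification of that key lemma, and you partially flag it yourself. You argue that if $h$ fixes every element of $B$, then ``by the Alexander method $h$ can be taken to fix the graph'' and hence the complementary regions threading $a_j$. But statement (2) of the main theorem only yields that $h$ induces \emph{some} automorphism of $G'(S,B)$ preserving each curve of $B$; it does not make that automorphism the identity, and nontrivial such automorphisms exist. Concretely, if $B$ is a chain of $2g$ curves filling a genus-$g$ subsurface ($g\geq 3$), the hyperelliptic-type involution of that subsurface, damped to the identity near its boundary, fixes the isotopy class of every curve of $B$ yet moves other curves inside the subsurface; so for such a $B$ one has $U(B)\not\subseteq U(\{a\})$ for suitable $a$. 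Hence ``$h$ fixes $B$'' does not by itself recover the crossing data of $a_j$, and the lemma requires either a finer choice of $B$ or an argument genuinely using stability of the full system $\Gamma$, neither of which appears in your sketch. To be fair, the paper asserts the analogous containment with no more justification (and with the inclusion miswritten), so you have essentially reproduced the paper's argument including its weakest step; but as a standalone proof, that step does not go through as written.
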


\begin{proof}
We have that $\mathcal{T}$ is a priori finer than $\mathcal{T}_\Gamma$ since $\mathcal{T}_\Gamma\subset\mathcal{T},$ so it remains to show the opposite inclusion. Let $h\in \MCG(S)$ and let $A$ be a set of isotopy classes curves such that $g\cdot U(A)$ is a basis element that contains $h$. Then, $h(\alpha)=g(\alpha)$ for $\alpha\in A,$ so $g\cdot U(A)=h\cdot U(A).$ Abusing notation, let $\Gamma_A\subset \Gamma$ be a finite set of curves such that the subsurface of $S$ filled by $\Gamma_A$ contains all the curves in $A.$ We then have that $h\in g\cdot U(A) = h\cdot U(A)\subseteq h\cdot U(\Gamma_A),$ completing the proof.
\end{proof}

\medskip\noindent\textbf{The mapping class group is not discrete.} We now use our characterization of the topology on $\MCG(S)$ to verify that the mapping class group is not discrete. 

Let $\Gamma$ be a stable Alexander system. 
Take $\{c_n\}$ to be a set of distinct curves in $\Gamma.$ It follows from the local finiteness of Alexander systems and our characterization of the topology of the mapping class group in Proposition \ref{prop:topology} that $\{T_{c_n}\}_{n\in\mathbb{N}}$ limits to the identity. This is so because every finite-type subsurface of $S$ is eventually fixed by the $T_{c_n}.$

This is akin to the example provided by Aramayona--Vlamis in their proof of the fact that mapping class groups of infinite-type surfaces are not discrete. Further discussion of this result is provided in Aramayona--Vlamis \cite{AV}.

\medskip\noindent\textbf{The mapping class group is not locally compact.} Aramayona--Vlamis further discuss why the mapping class group of an infinite-type surface is not locally compact; this discussion is rooted in the permutation topology. We show the same result, but using the Alexander topology.

To begin, fix a curve $c$ and consider the mapping class $T_c.$ Then, any neighborhood $N$ of $T_c$ must contain all the powers of $T_c$ (if $T_c$ fixes a curve, then so does $T^k_c$ for all $k$). As such, $\{T^k_c\}$ is contained in $N$.

Suppose $h$ is the limit of $\{T^k_c\}$. Applying Proposition \ref{prop:topology}, we see that $\{T^k_c\}$ must agree with $h$ on increasing and exhaustive subsets of $\Gamma$ (and therefore $S$). However, $\{T^k_c\}$ does not stabilize on a neighborhood of $c$: let $\gamma$ be a curve that intersects $c.$ Then, $T^k(\gamma)\neq T^j(\gamma)$, implying that $\{T^k_c\}$ is not eventually in $h\cdot U(\{\gamma\}),$ a contradiction. Therefore, $\MCG(S)$ is not locally compact. 

The same sequence is used by Aramayona--Vlamis to show that the mapping class group is not locally compact \cite{AV}. The novelty in our approach is that the Alexander method provides a simpler way to check convergence.

\section{Proof of the Alexander method}\label{sec:proof}

In this section, we prove the main theorem. Let $S$ be a surface and $\Gamma$ an Alexander system on $S$. We restate the Alexander method for convenience.

Recall that $G(S,\Gamma)$ is the surface graph on $S$ whose vertices correspond to intersections between curves and arcs in $\Gamma$ as well as endpoints of arcs and whose edges correspond to the connected components of $\cup \gamma_i \setminus V(G)$. The barycentric subdivision of $G(S,\Gamma)$ is denoted $G'(S,\Gamma).$

With that notation, the main theorem states the following. Suppose $\phi:S\to S$ is a homeomorphism that permutes the isotopy classes of curves in $\Gamma$ according to permutation $\sigma.$  Then
\begin{enumerate}
	\item  there exists a homeomorphism $\psi:S\to S$ isotopic to $\phi$ rel $\partial S$ such that $\psi(\gamma_i)=\gamma_{\sigma(i)}$ for all $\gamma_i\in \Gamma,$ 
	\item $\phi$ induces a unique automorphism $\phi_*$ of $G'(S,\Gamma),$ and 
	\item if $\Gamma$ is filling, then $\phi_*$ is the identity if and only if $\phi$ is isotopic to the identity.
\end{enumerate}

\begin{proof}[Proof of main theorem.] We prove the three statements of the Alexander method in turn.

\medskip\noindent\textit{Statement 1: finding a homeomorphism $\psi$ isotopic to $\phi$.} We first notice that $\phi(\Gamma)$ is an Alexander system. Since we will be isotoping $\phi(\Gamma)$ to $\Gamma,$ it is sufficient to consider isotoping an Alexander system to another Alexander system such that each curve (or arc) in the first is isotopic to a curve (or arc) in the second.

With that in mind, let $\Gamma$ and $\Gamma'$ be Alexander systems in $S$ such that $\gamma_i\in\Gamma$ is isotopic to $\gamma_i'\in\Gamma'.$ We will construct an isotopy of $S$ such that each $\gamma_i$ is taken to $\gamma_i'.$ 

 Both Alexander systems are indexed by $\mathbb{N}$ or $\{1,\ldots,n\}$. We present the proof in the former case and the latter case follows the same outline. 

We claim that there exists a subsurface exhaustion $\{S_i\}_\mathbb{N}$ of $S$ as follows: $S_i$ contains $\gamma_i\cup\gamma'_i.$ That is, $\gamma_i$ and $\gamma'_i$ are isotopic in $S_i$ (rel $\partial S_i$). This is so because each curve and arc exists on a finite-type subsurface. The work of Richards \cite{Richards} gives us that there exists an exhaustion of $S$ by finite-type subsurfaces, so each $\gamma_i\cup\gamma_i'$ lives on a subsurface in the exhaustion. We take a subsequence of the exhaustion to obtain $\{S_i\}$ as required above.

We will now define a sequence of isotopies that we will perform sequentially to produce an isotopy $H:S\times I\to S$ such that $H(S,0)=\id$ and $\psi(S):=H(S,1),$ with $\psi(\gamma_i)=\gamma'_i.$ 

We claim that there exists a sequence of isotopies $H_i:S\times I \to S$ supported on $S_i$ such that
\begin{enumerate}
    \item the $i$th curve or arc is corrected: $H_i(\gamma_i,1)=\gamma'_i$,
    \item previously corrected curves or arcs are not moved: for all $j<i$ and $p\in \gamma_j,$ $H_i(p,t)=p$ for all $t\in I,$ 
    \item the intersections with other curves (and arcs) are preserved: $H_i(\gamma_i\cap \gamma_j,1)=\gamma_i'\cap \gamma'_j$ for all $j$ in such a way that each connected component of $H_i(\gamma_j\setminus \gamma_i,1)$ is isotopic to a component of $\gamma'_j\setminus \gamma'_i$ rel $\gamma'_i,$ 
    \item non-intersected subsurfaces are fixed: if $(\gamma_i\cup \gamma'_i) \cap S_j=\emptyset,$ then $H_i$ is supported on $S_i\setminus S_j$ (for $j<i$), and
    \item isotopies are gluable: $H_i(x,1)=H_{i+1}(x,0)$ for all $x\in S.$
\end{enumerate}

To prove the claim, we address each of the five points. Point (1) is possible by directly applying the Alexander method for finite-type surfaces on $S_i$ \cite[Proposition 2.8]{primer}. Point (2) is achievable since we can perform the isotopy on $S_i$ cut along $\gamma_j$ for $j<i$ (retaining the boundary arising from $\gamma_j$). Such an isotopy is the identity on the boundary---and, as such, it extends to an isotopy of $S$ that fixes $\gamma_j.$ Point (3) is possible because we can perform an isotopy that rotates, stretches, and/or shrinks along $\gamma_i$ until the intersections line up. 

We now prove that point (4) is achievable. Let $\Delta_j$ be the set of nonperipheral curves in $\partial S_j.$ Then, $\Delta_j\cup \{\gamma_i\}$ and $\Delta_j\cup \{\gamma'_i\}$ are Alexander systems in $S_i$ that satisfy the hypotheses for the finite-type Alexander method. Following the proof of Lemma 2.9 in Farb--Margalit \cite{primer}, there is an isotopy of $S_i$ that takes $\gamma_i$ to $\gamma'_i$ and fixes $\Delta_j.$ As such, we may take $S_j$ to be fixed as well.

Property (5) is achieved by defining the isotopies sequentially. We have therefore completed the proof the claim.

We define a sequence of functions to reparametrize the intervals.
\begin{align*}
    \zeta_i:\left[1-\frac{1}{2^{i-1}},1-\frac{1}{2^{i}}\right] &\to [0,1]\\
    x &\mapsto 2^i \left[ x-\left(1-\frac{1}{2^{i-1}}\right)\right]
\end{align*} 

Finally, we define our overall isotopy $H:S\times I\to S$.
\[
    H(p,t)=
    \begin{cases}
    H_i(p, \zeta_i(t)) & t \in \left[ 1-\frac{1}{2^{i-1}}, 1-\frac{1}{2^i}\right]\\
    \psi(p) &t=1
    \end{cases}
\]

The map $H$ is continuous for all $(x,t)$ if $t<1$ due to the continuity of the $H_i$, point 
(5) above, and the pasting lemma from point-set topology. It remains to show that $H$ is continuous at $t=1.$ In other words, we want to show that, for all $p\in S,$ there exists an open neighborhood $U_p$ of $p$ and $t_0\in [0,1)$ such that $H(x,t)=x$ for all $x\in U_p$ for all $t>t_0$ for some $t_0.$ Suppose $p\in \interior(S_i)$ for some $i$. Let $J_0$ be the largest $j$ such that $(\gamma_{J_0}\cup\gamma'_{J_0})\cap S_i \neq \emptyset.$ Such a largest value exists due to the local finiteness of Alexander systems. Then, $H_J(x,t)|_{S_i\times I}=x$ for all $J>J_0$ and for all $t$ by point (4) above. Since $p\in \interior(S_i),$ we have that there is an open neighborhood $U_p$ of $p$ such that $H(x,t)=\psi(x)$ for all $x\in U_p$ for all $t>t_0$ for some $t_0.$

The homeomorphism $\psi(x):=H(x,1)$ has the property $\psi(\gamma_i)=\gamma_i'$ by points (1), (2), and (3) above, so $\gamma_i$ is isotoped to $\gamma'_i$ and is not moved afterward.

\medskip\noindent\textit{Statement 2: $\phi$ induces a unique automorphism of $G(S,\Gamma).$}
We now return to our original notational conventions. That is, the $\Gamma$ in the proof of statement 1 is now $\phi(\Gamma)$ and the $\Gamma'$ in the proof of statement 1 is now $\Gamma.$ It follows from Statement 1 that $\phi$ induces an automorphism of $G(S,\Gamma)$. It remains to show that this automorphism is unique. 

It is sufficient to show that any automorphism of $G(S,\Gamma)$ induced by an isotopy of $S$ is the identity. The universal cover of $S$ is the hyperbolic plane $\mathbb{H}^2$. The preimage of $G(S,\Gamma)$ in $\mathbb{H}^2$ is a graph whose vertices are the preimages of vertices of $G(S,\Gamma)$ and edges are preimages of edges in $G(S,\Gamma).$ An isotopy of $S$ lifts to an isotopy of $\mathbb{H}^2$. 

As each arc and curve is isotoped on a finite-type surface, the induced graph automorphism of the preimage of $G(S,\Gamma)$ must preserve the endpoints of the lifts of the arcs and curves at $\partial \mathbb{H}^2$, and therefore preserve every elevation of every curve and arc in $\Gamma.$ As such, all intersections between elevations of curves in $\Gamma$ must be preserved by the isotopy, and it follows that the induced automorphism of the preimage graph in $\mathbb{H}^2$ is the identity. We conclude that $\phi$ induces a unique automorphism of $G(S,\Gamma).$

\medskip\noindent\textit{Statement 3: inducing the identity automorphism on $G'(S,\Gamma)$ implies being isotopic to the identity.}
Let $\Gamma$ be a filling Alexander system in $S$. Suppose $\phi_*$ is the identity automorphism of $G'(S,\Gamma)$; that is, such that $\phi_*$ preserves the orientation of loop edges in $G(S,\Gamma)$. By our work above, $\phi_*$ takes vertices to vertices and edges to edges; thus, if an edge has distinct vertices, its orientation is preserved by $\phi_*.$ Then, up to isotopy, $\phi$ fixes each (potentially once-punctured or M\"obius band) disk in $S\setminus \Gamma$. Furthermore, $\phi$ fixes, with orientation, each edge (and thus the boundary) of each disk. It follows that, up to isotopy, $\phi$ is equal to the identity on every disk, once-punctured disk, and M\"obius band. We then apply the pasting lemma from point-set topology to obtain that $\phi$ is isotopic to the identity.
\end{proof}

\section{Non-example of an Alexander system}\label{sec:nonexamples}

In this section, we illustrate why the local finiteness condition on Alexander systems is necessary.

The main idea of this restriction is that we do not want any limit points of distinct elements of our Alexander system. That is, any sequence $\{x_n\}$ of points on distinct $\gamma_i$ does not converge. Let us consider an example with seemingly minimal convergence.

Let $S$ be the plane with punctures at the origin and at angles $\frac{1}{n}$ and $0$ on the unit circle. Let $\Gamma$ consist of arcs connecting 0 to the punctures on the unit circle (except to the puncture at angle 0). It suffices to show that $\Gamma$ is not well-defined up to homeomorphism. 

Let $\Gamma_1$ be the collection of arcs that appear as straight rays from the origin to the punctures on the unit circle (except at angle 0). Let $\Gamma_2$ be the same collection of arcs, except the arc at angle $1$ is isotoped counterclockwise to hit the positive $x$-axis before $x=1$. As such, there is a sequence of points on the arcs in $\Gamma_2$ that converges to a point in an arc; this is a property preserved by homeomorphism and it is not true of $\Gamma_1.$ An example of such arrangements is shown in Figure~\ref{fig:localfinitenessnonexample}.

\begin{figure}[h]
    \centering
	\includegraphics[width=\textwidth]{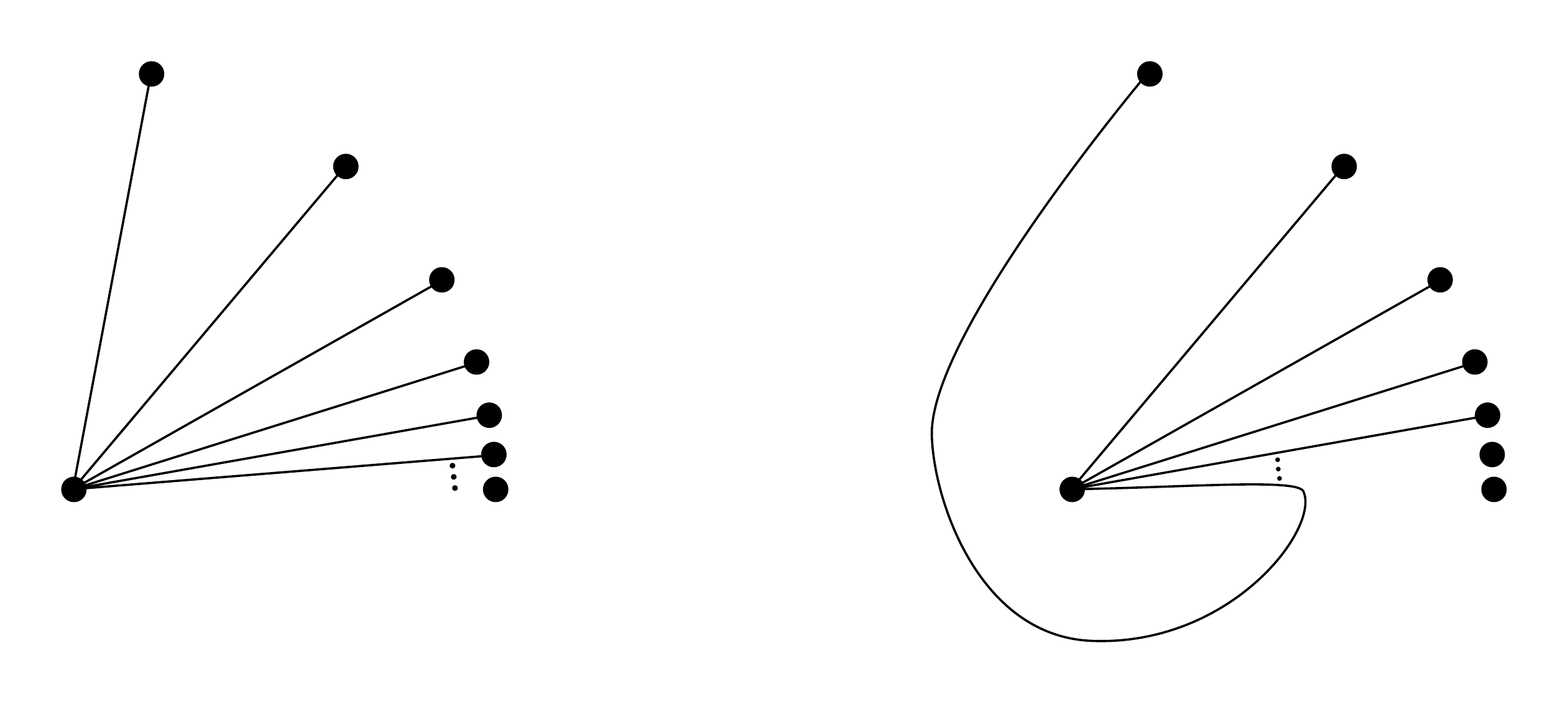}
    \caption{An example of an Alexander system $\Gamma$ on a surface $S$ where local finiteness is broken. We note that any disk about the origin (isolated puncture connected to all other punctures) intersects infinitely many arcs in $\Gamma.$ The key point is that although all arcs in the left image are isotopic to their corresponding arcs in the right image, the two configurations are not homeomorphic.}
    \label{fig:localfinitenessnonexample}
\end{figure}

We note that we require that arcs be of finite type in order for the above proof technique to work. However, there may be a version of the Alexander method which allows for all arcs.

\bibliography{AlexMethodPaper}{}
\bibliographystyle{plain}

\end{document}